\documentclass[12pt]{amsart}
\usepackage{amssymb}
\usepackage{mathrsfs}
\usepackage{hyperref}
\usepackage{fullpage}

\newtheorem{theorem}{Theorem}
\numberwithin{theorem}{section}
\newtheorem{lemma}[theorem]{Lemma}

\newtheorem{definition}[theorem]{Definition}

\newcommand{\Aut}{\ensuremath{\mathrm{Aut}}}

\newcommand{\cl}{\ensuremath{\mathrm{cl}}}

\newcommand{\codim}{\ensuremath{\mathrm{codim}}}

\newcommand{\cond}{\ensuremath{\mathrm{cond}}}

\newcommand{\End}{\ensuremath{\mathrm{End}}}

\newcommand{\Fr}{\ensuremath{\mathrm{Fr}}}
\newcommand{\Frss}{\ensuremath{\mathrm{Fr\text{-}ss}}}

\newcommand{\Gal}{\ensuremath{\mathrm{Gal}}}

\newcommand{\gln}{\ensuremath{\operatorname{GL}}}

\newcommand{\intal}{\ensuremath{\mathrm{intal}}}

\newcommand{\Iw}{\ensuremath{\mathrm{Iw}}}

\newcommand{\loc}{\ensuremath{\mathrm{loc}}}

\newcommand{\modu}{\ensuremath{\mathrm{\;mod\;}}}

\newcommand{\ord}{\ensuremath{\mathrm{ord}}}

\newcommand{\Qbar}{\ensuremath{\overline{Q}}}

\newcommand{\res}{\ensuremath{\mathrm{res}}}

\newcommand{\Sp}{\ensuremath{\mathrm{Sp}}}

\newcommand{\tr}{\ensuremath{\mathrm{tr}}}

\newcommand{\ur}{\ensuremath{\mathrm{ur}}}

\newcommand{\WBC}{\ensuremath{\mathrm{WBC}}}
\newcommand{\WD}{\ensuremath{\mathrm{WD}}}

\newcommand{\cf}{{\it cf.}}

\newcommand{\ie}{{\it i.e.}}
\newcommand{\loccit}{{\it loc.\,cit}}

\newcommand{\hra}{\hookrightarrow}

\newcommand{\xra}{\xrightarrow}

\newcommand{\bbA}{\ensuremath{\mathbb{A}}}

\newcommand{\bbC}{\ensuremath{\mathbb{C}}}

\newcommand{\bbQ}{\ensuremath{\mathbb{Q}}}
\newcommand{\bbQbar}{\ensuremath{\overline\bbQ}}
\newcommand{\bbQp}{\ensuremath{{\mathbb{Q}_p}}}
\newcommand{\bbQpbar}{{\ensuremath{\overline{\mathbb{Q}}_p}}}
\newcommand{\bbQl}{\ensuremath{\mathbb{Q}_\ell}}
\newcommand{\bbQlbar}{\ensuremath{\overline\bbQ_\ell}}

\newcommand{\bbT}{\ensuremath{\mathbb{T}}}
\newcommand{\bbZ}{\ensuremath{\mathbb{Z}}}

\newcommand{\bbZp}{\ensuremath{{\mathbb{Z}_p}}}

\newcommand{\calF}{\ensuremath{\mathcal{F}}}

\newcommand{\calO}{\ensuremath{\mathcal{O}}}

\newcommand{\calR}{\ensuremath{\mathcal{R}}}

\newcommand{\fraka}{\ensuremath{\mathfrak{a}}}

\newcommand{\frakl}{\ensuremath{\mathfrak{l}}}
\newcommand{\frakm}{\ensuremath{\mathfrak{m}}}

\newcommand{\frakp}{\ensuremath{\mathfrak{p}}}

\newcommand{\scrL}{\ensuremath{\mathscr{L}}}
\newcommand{\scrLbar}{\ensuremath{{\overline{\mathscr{ L}} }  }}

\newcommand{\scrO}{\ensuremath{\mathscr{O}}}

\begin{document}
\title{Conductors in $p$-adic families}

\author{Jyoti Prakash Saha}

\address{Max Planck Institute for Mathematics, Vivatsgasse 7, 53111 Bonn, Germany}

\email{saha@mpim-bonn.mpg.de}

\subjclass[2010]{11F55, 11F80}

\keywords{$p$-adic families of automorphic forms, Pure representations, Conductors}


\begin{abstract}
Given a Weil-Deligne representation of the Weil group of an $\ell$-adic number field with coefficients in a domain $\mathscr{O}$, we show that its pure specializations have the same conductor. More generally, we prove that the conductors of a collection of pure representations are equal if they lift to Weil-Deligne representations over domains containing $\mathscr{O}$ and the traces of these lifts are parametrized by a pseudorepresentation over $\mathscr{O}$.  
\end{abstract}
\maketitle

\section{Introduction}
The aim of this article is to study the variation of automorphic and Galois conductors in $p$-adic families of automorphic Galois representations, for instance, in Hida families and eigenvarieties. We relate the variation of Galois conductors in families to purity of $p$-adic automorphic Galois representations at the finite places not dividing $p$ and for the variation of automorphic conductors, we use local-global compatibility. We establish the constancy of tame conductors at arithmetic points lying along irreducible components of $p$-adic families. 

\subsection{Motivation}
In \cite{HidaGalrepreord, HidaIwasawa}, Hida showed that the $p$-ordinary eigen cusp forms, \ie, the normalized eigen cusp forms whose $p$-th Fourier coefficients are $p$-adic units (with respect to fixed embeddings of $\bbQbar$ in $\bbC$ and $\bbQpbar$), can be put in $p$-adic families. More precisely, he showed that for each positive integer $N$ and an odd prime $p$ with $p\nmid N$ and $Np\geq 4$, there is a subset of the set of $\bbQpbar$-specializations of the universal $p$-ordinary Hecke algebra $h^\ord(N; \bbZp)$, called the set of arithmetic specializations, such that there is a one-to-one correspondence between the arithmetic specializations of $h^\ord(N;\bbZp)$ and the $p$-ordinary $p$-stabilized normalized eigen cusp forms of tame level a divisor of $N$. It turns out that the tame conductors of the Galois representations attached to the arithmetic specializations remain constant along irreducible components of $h^\ord(N; \bbZp)$ (see \cite[Theorem 2.3, 2.4]{HidaPadicmeasure}). Following Hida's construction of families of ordinary cusp forms, further examples of families of automorphic Galois representations are constructed, for instance, Hida families of ordinary automorphic representations of definite unitary groups, families of overconvergent forms (see the works of Hida \cite{HidaControThmPNearlyOrdinaryCohGroupSLn}, Coleman, Mazur \cite{ColemanMazurEigencurve}, Chenevier \cite{ChenevierGLn}, Bella\"iche, Chenevier \cite{BellaicheChenevierU3} {\it et.\,al.}). The aim of this article is to understand the variation of automorphic and Galois conductors in these families of automorphic Galois representations. Since the restrictions of $p$-adic automorphic Galois representations to decomposition groups at places outside $p$ are known to be pure in many cases, we focus on the variation of conductors in families of pure representations of local Galois groups. 

\subsection{Results obtained}
Let $p,\ell$ be two distinct primes and $K$ be a finite extension of $\bbQl$. Let $\scrO$ be an integral domain containing $\bbQ$. In theorem \ref{Thm: conductor big}, we show that given any Weil-Deligne representation of the Weil group $W_K$ of $K$ with coefficients in $\scrO$, its conductor coincides with the conductors of its pure specializations over $\bbQpbar$. Next, in theorem \ref{Thm: conductor pseudorepresentation}, we show that a collection of pure representations of $W_K$ over $\bbQpbar$ have the same conductor if they lift to Weil-Deligne representations of $W_K$ over domains containing $\scrO$ and the traces of these lifts are parametrized by a pseudorepresentation $T:W_K \to \scrO$. 

The eigenvarieties are an important source of examples of families of Galois representations. The traces of the Galois representations attached to the arithmetic points of an eigenvariety are interpolated by a pseudorepresentation defined over the global sections of the eigenvariety. Unfortunately, this pseudorepresentation does not lift to a Galois representation over the global sections. However, by \cite[Lemma 7.8.11]{BellaicheChenevierAsterisQUE}, around each nonempty admissible open affinoid subset $U$, the pseudorepresentation defined over the global sections of an eigenvariety lifts to a Galois representation on a finite type module over some integral extension of the normalization of $\calO(U)$. But this module is not known to be free over its coefficient ring. So theorem \ref{Thm: conductor pseudorepresentation} cannot be applied to eigenvarieties to study the tame conductors of all arithmetic points. To circumvent this problem, we establish theorem \ref{Thm: conductor sum} which can be used to study the tame conductors of a large class of arithmetic points. Theorem \ref{Thm: conductor big}, \ref{Thm: conductor pseudorepresentation}, \ref{Thm: conductor sum} apply to $p$-adic families of automorphic representations, for instance, to ordinary families, overconvergent families, and explain the variation of the tame conductors. We illustrate it using the example of Hida family of ordinary automorphic representations for definite unitary groups (see theorem \ref{Thm: Application: Hida unitary}). 

\section{Preliminaries}
\label{Sec: Preliminaries}
For every field $F$, we fix an algebraic closure $\overline F$ of it and denote by $G_F$ the Galois group $\Gal(\overline F/F)$. For a finite place $v$ of a number field $E$, the decomposition group $\Gal(\overline E_v/E_v)$ is denoted by $G_v$. Let $W_v\subset G_v$ (resp. $I_v\subset G_v$) denote the Weil group (resp. inertia group) and $\Fr_v\in G_v/I_v$ denote the geometric Frobenius element. The fraction field of a domain $A$ is denoted by $Q(A)$ and the field $\overline{Q(A)}$ is denoted by $\overline Q (A)$. If $\frakp$ is a prime ideal of a ring $A$, then the mod $\frakp$ reduction map is denoted by $\pi_\frakp$. The integral closure of a domain $R$ in $\Qbar (R)$ is denoted by $R^\intal$. For each map $f:R\to S$ between domains, we fix an extension $f^\intal :R^\intal \to S^\intal$ of $f$. 

Let $q$ denote the cardinality of the residue field $k$ of the ring of integers $\calO_K$ of $K$. Let $I_K$ denote the inertia subgroup of $G_K$. Let $\{G_K^s\}_{s\geq -1}$ denote the upper numbering filtration on $G_K$ by ramification subgroups. Fix an element $\phi\in G_K$ which lifts the geometric Frobenius $\Fr_k\in G_k$. The Weil group $W_K$ is defined as the subgroup of $G_K$ consisting of elements which map to an integral power of the geometric Frobenius element $\Fr_k$ in $G_k$. Its topology is determined by decreeing that $I_K$ is open and has its subspace topology induced from $G_K$. Define $v_K:W_K\to \bbZ$ by $\sigma|_{K^\ur}=\Fr_k^{v_K(\sigma)}$ for all $\sigma\in W_K$. 

\begin{definition}[{\cite[8.4.1]{DeligneConstantesDesEquationsFunctional}}]
\label{Defn: Weil-Deligne representations}
Let $A$ be a commutative domain of characteristic zero. A \textnormal{Weil-Deligne representation} of $W_K$ on a free $A$-module $M$ of finite rank is a triple $(r,M,N)$ consisting of a representation $r:W_K\to \Aut_A (M)$ with open kernel and a nilpotent endomorphism $N\in \End_A(M)$ such that 
$$r(\sigma)N r(\sigma)^{-1}=q^{-v_K(\sigma)}N$$ 
for all $\sigma\in W_K$. The operator $N$ is called the \textnormal{monodromy} of $(r, M, N)$. 
\end{definition}

Suppose $(r,V,N)$ is a Weil-Deligne representation with coefficients in a field $L$ of characteristic zero which contains the characteristic roots of all elements of $r(W_K)$. Let $r(\phi)=r(\phi)^{ss}u=ur(\phi)^{ss}$ be the Jordan decomposition of $r(\phi)$ as the product of a diagonalizable matrix $r(\phi)^{ss}$ and a unipotent matrix $u$. Following \cite[8.5]{DeligneConstantesDesEquationsFunctional}, define $\tilde r(\sigma)=r(\sigma) u^{-v_K(\sigma)}$ for all $\sigma\in W_K$. Then $(\tilde r, V, N)$ is a Weil-Deligne representation (by \cite[8.5]{DeligneConstantesDesEquationsFunctional}) and is called the {\it Frobenius-semisimplification} of $(r,V, N)$ (\cf\,\cite[8.6]{DeligneConstantesDesEquationsFunctional}). It is denoted by $V^\Frss$. 

\begin{definition}
If $(r, V, N)$ is Weil-Deligne representation of $W_K$ on a vector space $V$ over an algebraically closed field of characteristic zero, then its \textnormal{conductor} is defined as 
$$\cond (r,V, N) = V^{I_K, N=0} + \int_0 ^\infty \codim V^{G_K^u} du $$
where $V^{I_K, N=0}$ denotes the subspace of $V$ on which $I_K$ acts trivially and $N$ is zero. 
\end{definition}

\begin{lemma}
\label{Lemma: invariant space}
Let $f:A\to B$ be a map between domains of characteristic zero. Let $E$ (resp. $F$) be a field containing $A$ (resp. $B$). Let $\rho:G\to \gln_n(A)$ be a representation of a group $G$. Then the $E$-dimension of the space of $G$-invariants of the representation $G \xra{\rho} \gln_n(A) \hra \gln_n(E)$ and the $F$-dimension of the space of $G$-invariants of the representation $G \xra{\rho} \gln_n(A) \xra{f} \gln_n(B) \hra \gln_n(F)$ are equal if $G$ is finite. 
\end{lemma}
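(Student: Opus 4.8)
The plan is to compute both invariant dimensions by means of the averaging idempotent, and to observe that each equals, up to the scalar $1/|G|$, a sum of traces of $\rho$ that is an honest element of $A$ on which $f$ may be evaluated directly.

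First I would regard $M=A^n$ with $G$ acting through $\rho$, and introduce the operator $e=\frac{1}{|G|}\sum_{g\in G}\rho(g)$. Since $G$ is finite and both $E$ and $F$ have characteristic zero, $|G|$ is invertible in either field, so $e$ is defined over $E$ and, after applying $f$, over $F$. A routine computation shows that $e$ is idempotent with image exactly the space of $G$-invariants; being an idempotent over a field, it is diagonalizable with eigenvalues $0$ and $1$, so its trace equals the dimension of its image. Hence
$$\dim_E (E^n)^G = \tr(e) = \frac{1}{|G|}\sum_{g\in G}\tr\rho(g).$$
Now each $\tr\rho(g)$ lies in $A$, so $S:=\sum_{g\in G}\tr\rho(g)\in A$, while the left-hand side is a nonnegative integer $d$. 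This produces the integral identity $S=|G|\cdot d$ in $A$ (with $d$ read as $d\cdot 1_A$). Applying the ring homomorphism $f$, which commutes with the trace and fixes integer multiples of the identity, yields $\sum_{g\in G}\tr(f\circ\rho)(g)=f(S)=|G|\cdot d$ in $B$. Repeating the idempotent computation over $F$ for the representation $f\circ\rho$ then gives $\dim_F (F^n)^G=\frac{1}{|G|}\sum_{g\in G}\tr(f\circ\rho)(g)=d$, which is the claimed equality.

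The step I expect to demand the most care is precisely the passage through $f$: one cannot simply apply $f$ to the expression $\frac{1}{|G|}\sum_{g\in G}\tr\rho(g)$, since $f$ is defined only on $A$ and $1/|G|$ need not lie in $A$. The finiteness of $G$ together with the characteristic-zero hypotheses is used exactly to guarantee that this quotient is an integer, so that the argument can be rephrased as the integral identity $S=|G|\cdot d$ inside $A$, to which $f$ does apply. Everything else — idempotency of $e$, the fact that the trace of an idempotent over a field equals the dimension of its image, and the compatibility of $f$ with the trace — is routine, so I would keep those verifications brief.
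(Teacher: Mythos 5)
Your proof is correct and follows exactly the paper's argument: both compute the invariant dimension as the trace of the averaging idempotent $\frac{1}{|G|}\sum_{g\in G}\rho(g)$ and transfer it through $f$ using the fact that $f$ fixes integers. The only difference is that you spell out carefully the step the paper compresses into ``since $f(n)$ is equal to $n$ for any integer $n$, the lemma follows,'' namely rephrasing the trace identity as $\sum_{g\in G}\tr\rho(g)=|G|\cdot d$ inside $A$ before applying $f$ --- a worthwhile clarification, but the same proof.
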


\begin{proof}
If $r: G \to \gln_n(\calF)$ is a representation where $\calF$ is a field of characteristic zero, then the dimension of the space of $G$-invariants of $r$ is equal to the trace of the idempotent operator $\frac 1{|G|} \sum_{g\in G} r(g)$. Since $f(n)$ is equal to $n$ for any integer $n$, the lemma follows. 
\end{proof}

\section{Main results}
\label{Sec: Main results}
Denote the fraction field of $\scrO$ by $\scrL$ and the algebraic closure of $\bbQ$ in $\scrO$ by $\bbQ^\cl$. 

\begin{theorem}
\label{Thm: conductor big}
Let $(r, N): W_K \to \gln_n(\scrO)$ be a Weil-Deligne representation of $W_K$ with coefficients in $\scrO$. If $f:\scrO \to \bbQpbar$ is a map such that $f\circ (r, N)$ is pure, then the conductor of $f\circ (r, N)$ is equal to the conductor of $(r, N)\otimes _\scrO \Qbar(\scrO)$.
\end{theorem}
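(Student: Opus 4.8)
The plan is to decompose the conductor into three pieces and to match each across the two specializations. Write $V$ for the underlying module, and let $e\in M_n(\scrO)$ be the averaging idempotent onto the inertia invariants for the finite group $r(I_K)$ (its entries lie in $\scrO$ because $\bbQ\subseteq\scrO$), so that $V^{I_K}=eV$. Taking $\sigma\in I_K$ in $r(\sigma)Nr(\sigma)^{-1}=q^{-v_K(\sigma)}N$ and using $v_K|_{I_K}=0$ shows that $N$ commutes with $r(I_K)$; hence $N$ preserves $V^{I_K}$ and
\[ \codim_V V^{I_K,N=0}=\codim_V V^{I_K}+\rk\bigl(N|_{V^{I_K}}\bigr). \]
Thus $\cond(r,V,N)$ is the sum of the tame term $\codim V^{I_K}$, the Swan term $\int_0^\infty\codim V^{G_K^u}\,du$, and the monodromy rank $\rk(N|_{V^{I_K}})$, and it suffices to show that each agrees between $(r,N)\otimes_\scrO\Qbar(\scrO)$ and $f\circ(r,N)$.

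For the first two terms I would appeal to Lemma \ref{Lemma: invariant space}. Since $r$ has open kernel, $I_K$ and every ramification group $G_K^u$ with $u\geq 0$ have finite image under $r$, so $r$ factors on each through a finite quotient. Applying Lemma \ref{Lemma: invariant space} to $\scrO\hookrightarrow\Qbar(\scrO)$ and to $f^\intal$ then shows that $\dim V^{G_K^u}$ (and $\dim V^{I_K}$) is the same for $(r,N)\otimes\Qbar(\scrO)$ as for $f\circ(r,N)$, for every $u$. In particular the two Swan integrands agree pointwise, so the Swan terms coincide, and the tame terms $\codim V^{I_K}$ coincide.

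It remains to compare the monodromy ranks $\mu_{\mathrm{gen}}=\rk(Ne)$ over $\Qbar(\scrO)$ and $\mu_f=\rk(f(Ne))$ over $\bbQpbar$, where $\rk(Ne)=\rk(N|_{V^{I_K}})$ and $Ne\in M_n(\scrO)$. Semicontinuity of rank under $f$ gives $\mu_f\leq\mu_{\mathrm{gen}}$. For the reverse I would use purity. As $\cond$ is unaffected by Frobenius semisimplification (this changes neither $V^{I_K}$, the $V^{G_K^u}$, nor $N$), I may assume $f\circ(r,N)$ is Frobenius-semisimple. Now $V^{I_K}$ is the trivial $I_K$-isotypic component of $V$; it is stable under $\phi$ (which fixes the trivial type) and under the $I_K$-equivariant $N$, and its complementary sum of nontrivial isotypic components is likewise stable, so $V^{I_K}$ is a direct Weil--Deligne summand and is therefore itself pure. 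By the classification of pure representations it is a sum of blocks $\chi\otimes\Sp(d)$ on each of which $N$ is a maximal nilpotent, so $\mu_f$ attains the largest rank $\rho_{\max}^{\mathrm{sp}}$ of any nilpotent satisfying the monodromy relation; this maximum depends only on the multiplicities of the eigenvalues of $\phi|_{V^{I_K}}$, grouped into chains under multiplication by $q$. Because the eigenvalues over $\bbQpbar$ are the $f^\intal$-images of those over $\Qbar(\scrO)$ (read off from the operator $\phi e\in M_n(\scrO)$), and because Frobenius eigenvalues are units with $q\in\bbQ$ fixed by $f$, no two eigenvalues within one $q$-chain can collide; specialization can only merge distinct chains and raise multiplicities, so the maximal rank is monotone, $\rho_{\max}^{\mathrm{gen}}\leq\rho_{\max}^{\mathrm{sp}}$. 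Combined with the elementary bound $\mu_{\mathrm{gen}}\leq\rho_{\max}^{\mathrm{gen}}$ this yields $\mu_{\mathrm{gen}}\leq\rho_{\max}^{\mathrm{gen}}\leq\rho_{\max}^{\mathrm{sp}}=\mu_f$, whence $\mu_{\mathrm{gen}}=\mu_f$ and the conductors agree.

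The main obstacle is precisely this monodromy comparison: the rank of $N$ is only semicontinuous, so a priori the tame conductor could drop under specialization, and the entire content of the theorem is that purity forbids this. The heart of the matter is the pair of assertions that a pure representation carries the \emph{maximal} monodromy allowed by its Frobenius eigenvalues, and that this maximal value behaves monotonically under the eigenvalue specialization induced by $f^\intal$ — the latter resting on the observation that $q$-chains of Frobenius eigenvalues cannot degenerate, since those eigenvalues are units. Everything else reduces cleanly to Lemma \ref{Lemma: invariant space}.
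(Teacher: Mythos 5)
Your proposal takes a genuinely different route from the paper's. The paper disposes of the whole theorem by invoking the rigidity theorem of the companion paper \cite[Theorem 4.1]{BigPuritySub}: purity of the specialization forces the Frobenius-semisimplifications of both $(r,N)\otimes_\scrO\Qbar(\scrO)$ and $f\circ(r,N)$ to decompose as $\bigoplus_i \Sp_{t_i}(\chi_i\otimes\rho_i)$ with the \emph{same} data $(t_i,\chi_i,\rho_i)$ (the second sum specialized through $f^\intal$), after which the conductor comparison is exactly Lemma \ref{Lemma: invariant space} applied to the finite-image pieces. You use Lemma \ref{Lemma: invariant space} in the same way for the tame and Swan terms, but you replace the citation of the rigidity theorem by a self-contained sandwich argument for the monodromy rank: semicontinuity gives $\mu_f\le\mu_{\mathrm{gen}}$, purity gives $\mu_f=\rho^{\mathrm{sp}}_{\max}$, and the chain-merging combinatorics give $\mu_{\mathrm{gen}}\le\rho^{\mathrm{gen}}_{\max}\le\rho^{\mathrm{sp}}_{\max}$, closing the loop. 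The supporting details do check out: $N$ commutes with $r(I_K)$, so $V^{I_K}$ is a Weil--Deligne direct summand and inherits purity; the Frobenius eigenvalues are units of $\scrO^\intal$ (their product is $\det r(\phi)\in\scrO^\times$), so $q$-chains never fold under $f^\intal$, and merging of distinct chains only increases $\sum_j\min(m_j,m_{j+1})$ since $\min(a+b,c+d)\ge\min(a,c)+\min(b,d)$; moreover the bound $\mu_{\mathrm{gen}}\le\rho^{\mathrm{gen}}_{\max}$ holds for the generic fibre without semisimplifying it, because any $N'$ with $\phi N'\phi^{-1}=q^{-1}N'$ maps generalized eigenspaces to generalized eigenspaces. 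What your approach buys is independence from the companion paper, at the price of doing the purity analysis by hand; what the paper's approach buys is a statement (identical decomposition data on the nose) that is strictly stronger than equality of conductors and is reused in Theorems \ref{Thm: conductor pseudorepresentation} and \ref{Thm: conductor sum}.

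There is, however, one under-justified step, and it is the crux. From ``the pure summand $V^{I_K}$ is a sum of blocks $\chi\otimes\Sp(d)$, on each of which $N$ is maximal'' you infer that $N$ is maximal globally. That inference is invalid for general sums of special representations: an unramified two-dimensional representation with Frobenius eigenvalues $\alpha$ and $\alpha/q$ and $N=0$ is a sum of two $\Sp(1)$ blocks, each trivially with maximal (zero) monodromy, yet the maximal compatible nilpotent has rank $1$. What rules this out is not the block decomposition itself but the fact that purity forces all blocks to be pure of the \emph{same} weight $w$, so that within each $q$-chain every $\Sp(d)$ block is centered at the same position; this makes the multiplicity profile $(m_j)$ of each chain symmetric and unimodal, and only then does the pure rank $\sum_j m_j-\max_j m_j$ coincide with the maximal rank $\sum_j\min(m_j,m_{j+1})$. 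You need to include this weight-alignment argument (or cite a precise reference for ``pure implies maximal monodromy''); as written, the one-line justification of $\mu_f=\rho^{\mathrm{sp}}_{\max}$ has a hole, even though the assertion itself is true and the rest of your argument goes through once it is filled.
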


\begin{proof}
By \cite[Theorem 4.1]{BigPuritySub}, there exist positive integers $m, t_1, \cdots, t_m$, unramified characters $\chi_1, \cdots, \chi_m:W_K \to (\scrO^\intal)^\times$, representations $\rho_1, \cdots, \rho_m$ of $W_K$ over $\bbQ^\cl$ with finite image such that 
$$((r, N)\otimes _\scrO \Qbar(\scrO)) ^\Frss \simeq \bigoplus_{i=1}^n \Sp_{t_i} (\chi_i \otimes \rho_i)_{/\scrLbar}, $$
$$
(f\circ (r, N) )^\Frss\simeq \bigoplus_{i=1}^n \Sp_{t_i} (f^\intal \circ (\chi_i \otimes \rho_i))_{/\bbQpbar}$$
So the dimension of $((r, N)\otimes _\scrO \Qbar(\scrO))^{I_K, N=0}$ (resp. $(f\circ (r, N))^{I_K, N=0}$) over $\scrLbar$ (resp. $\bbQpbar$) is equal to $\sum_{i=1}^n \rho_i^{I_K}$ (resp. $\sum_{i=1}^n (f^\intal \circ \rho_i)^{I_K}$). By lemma \ref{Lemma: invariant space}, the $\scrLbar$-dimension of $((r, N)\otimes _\scrO \Qbar(\scrO))^{I_K, N=0}$ is equal to the $\bbQpbar$-dimension of $(f\circ (r, N))^{I_K, N=0}$. Note that $G_K^u$ is contained in $I_K$ for any $u\geq 0$ and $r(H)$ is finite for any subgroup $H$ of $I_K$. Hence $\dim_\scrLbar ((r, N)\otimes _\scrO \Qbar(\scrO))^H$ is equal to $\dim _\bbQpbar (f \circ (r, N))^H$ by lemma \ref{Lemma: invariant space}. This shows that the conductor of $f\circ (r, N)$ is equal to the conductor of $(r, N)\otimes _\scrO \Qbar(\scrO)$.
\end{proof}

Now we establish an analogue of the above result for pseudorepresentations of Weil groups. The notion of pseudorepresentations was introduced by Wiles \cite{WilesOrdinaryLambdaAdic} in the two-dimensional case, and by Taylor \cite{TaylorGaloisReprAssociatedToSiegelModForms} in full generality. They are defined by abstracting the crucial properties of the trace of a group representation. 

\begin{theorem}
\label{Thm: conductor pseudorepresentation}
Let $\calO$ be an integral domain and $\res:\scrO \hra\calO$ be an injective map. Let $T: W_K \to \scrO$ be a pseudorepresentation of dimension $n\geq 1$ and let $(r, N):W_K \to \gln_n(\calO)$ be a Weil-Deligne representation such that $\res\circ T = \tr r$ and $f \circ (r, N)$ is pure for some map $f: \calO \to \bbQpbar$. Then there exists a non-negative integer $C$ such that the following statement holds. If $(r', N'): W_K\to \gln_n(\calO')$ is a Weil-Deligne representation over a domain $\calO'$ satisfying
\begin{enumerate}
\item $\res'\circ T = \tr r'$ for some injective map $\res':\scrO \hra \calO'$, 
\item $f'\circ (r', N')$ is pure for some map $f': \calO' \to \bbQpbar$, 
\end{enumerate}
then 
$$\cond \left( (r', N') \otimes_{\calO'} \Qbar (\calO') \right) = \cond \left(f'\circ (r', N') \right) = C.$$
\end{theorem}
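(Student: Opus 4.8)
The plan is to reduce everything to a statement about the single pseudorepresentation $T$, using Theorem \ref{Thm: conductor big} to pass freely between a lift and its pure specialization. First I would set $C:=\cond\bigl((r,N)\otimes_{\calO}\Qbar(\calO)\bigr)$; by Theorem \ref{Thm: conductor big} this already equals $\cond(f\circ(r,N))$, so the reference datum behaves as required and it remains only to show that every other lift $(r',N')$ produces the same value. Applying Theorem \ref{Thm: conductor big} to $(r',N')$ and $f'$ reduces the claim to the equality of the two generic conductors $\cond\bigl((r,N)\otimes\Qbar(\calO)\bigr)$ and $\cond\bigl((r',N')\otimes\Qbar(\calO')\bigr)$, so from now on I work only with the generic fibres, and (since the conductor is unchanged by Frobenius–semisimplification) with their $\Frss$.

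The unifying object is the semisimple representation attached to $T$: over the algebraically closed field $\Qbar(\scrO)$ the pseudorepresentation $T$ is the trace of a unique semisimple representation $\rho_T\colon W_K\to\gln_n(\Qbar(\scrO))$. Since $\res\circ T=\tr r$ and $\res'\circ T=\tr r'$, the underlying Weil representations of the two generic fibres are the base changes of $\rho_T$ along the \emph{injective} field extensions $\res^\intal\colon\Qbar(\scrO)\hra\Qbar(\calO)$ and $(\res')^\intal\colon\Qbar(\scrO)\hra\Qbar(\calO')$; in particular $\bigl(r\otimes\Qbar(\calO)\bigr)^{\mathrm{ss}}\cong\rho_T\otimes_{\res^\intal}\Qbar(\calO)$, and likewise for the primed lift. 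I would then split the conductor, following its definition, into the Swan part $\int_0^\infty\codim V^{G_K^u}\,du$ and the $N=0$ part $\codim V^{I_K,N=0}$. The Swan part is immediate: each $G_K^u$ lies in $I_K$ and acts through a finite quotient, so $\dim V^{G_K^u}$ is computed by averaging the trace and is the integer attached to $T|_{G_K^u}$ by Lemma \ref{Lemma: invariant space}; it is therefore the same for both generic fibres (equal to the corresponding invariant of $\rho_T$).

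The main obstacle is the $N=0$ part, the only genuinely monodromy‑dependent term, and the place where purity must be used. Invoking \cite[Theorem 4.1]{BigPuritySub} for each lift gives Frobenius–semisimplifications $\bigoplus_i\Sp_{t_i}(\chi_i\otimes\rho_i)$ and $\bigoplus_j\Sp_{t'_j}(\chi'_j\otimes\rho'_j)$ with the $\chi$'s unramified and the $\rho$'s of finite image; a one–line computation on a block gives $\dim\Sp_t(\chi\otimes\rho)^{I_K,N=0}=\dim\rho^{I_K}$, whence $\dim V^{I_K,N=0}=\sum_i\dim\rho_i^{I_K}$. Restricting to $V^{I_K}$, the decomposition exhibits the Jordan type of $N$: the block $\Sp_{t_i}(\chi_i\otimes\rho_i)$ contributes $\dim\rho_i^{I_K}$ Jordan blocks of size $t_i$, so $\dim V^{I_K,N=0}=\dim\ker\bigl(N|_{V^{I_K}}\bigr)$ equals the number of maximal $q$‑strings in the multiset of eigenvalues of $\phi$ on $V^{I_K}$ (purity forces the eigenvalues within each block to be genuinely $q$‑consecutive, and, all blocks being pure of the same weight with eigenvalue string centred at that weight, strings from distinct blocks cannot chain together). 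This eigenvalue multiset is cut out by the idempotent $\tfrac1{|r(I_K)|}\sum_{\sigma\in I_K}r(\sigma)$, so its characteristic polynomial has coefficients that are $\bbQ$‑linear combinations of values of $T$; hence the multiset for each generic fibre is the image, under $\res^\intal$ respectively $(\res')^\intal$, of the eigenvalue multiset of $\phi$ on $(\rho_T)^{I_K}$.

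The proof then closes on the observation that makes the monodromy term tractable: because $\res^\intal$ and $(\res')^\intal$ are injective and fix $q\in\bbQ$, they preserve both the multiplicities and the relation ``differ by a power of $q$'', so the number of maximal $q$‑strings is identical for the two generic fibres and equal to that of $\rho_T$. Therefore $\sum_i\dim\rho_i^{I_K}=\sum_j\dim\rho'^{I_K}_j$, the two $N=0$ parts agree, and combining with the Swan computation gives equality of the two generic conductors, hence of $\cond\bigl(f'\circ(r',N')\bigr)$ with $C$. The subtle point to get right is exactly this embedding‑invariance: the possible non‑injectivity of the specialization maps $f,f'$ is harmless, since Theorem \ref{Thm: conductor big} already disposes of each specialization on its own, while the comparison \emph{between} the two lifts is routed through the injective field maps $\res^\intal,(\res')^\intal$ to the common $\rho_T$, along which the $q$‑string combinatorics underlying the monodromy invariant is manifestly preserved.
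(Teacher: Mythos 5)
Your proposal is correct in substance but takes a genuinely different route from the paper's. The paper's proof is essentially a single citation: \cite[Theorem 5.4, Proposition 2.8]{BigPuritySub} asserts directly that there exist data $(m, t_i, \chi_i, \rho_i)$ depending \emph{only} on $T$ such that for \emph{every} lift $(r',N')$ satisfying (1) and (2) one has $((r',N')\otimes_{\calO'}\Qbar(\calO'))^\Frss \simeq \bigoplus_{i=1}^m \Sp_{t_i}(\res'^\intal\circ(\chi_i\otimes\rho_i))$; Lemma \ref{Lemma: invariant space} and Theorem \ref{Thm: conductor big} then immediately give the constant $C=\cond\bigl(\oplus_{i=1}^m\Sp_{t_i}(\chi_i\otimes\rho_i)\bigr)$. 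You instead use only the single-lift input (Theorem \ref{Thm: conductor big}, i.e.\ \cite[Theorem 4.1]{BigPuritySub} applied to each lift separately) and re-derive the needed uniformity across lifts by hand: Taylor's theorem attaches $\rho_T$ to $T$ over $\Qbar(\scrO)$, the trace condition identifies the semisimplified Weil representations of both generic fibres with base changes of $\rho_T$ along $\res^\intal$ and $\res'^\intal$, the Swan term is handled by trace-averaging, and purity is used to argue that the eigenvalue multiset of $\phi$ on inertia invariants (a multiset determined by $T$, hence common to both lifts up to the injective embeddings) pins down the Jordan type of $N$ there. This buys a self-contained argument that makes visible \emph{why} purity rigidifies the monodromy --- in effect you reprove the restriction to $I_K$-invariants of \cite[Theorem 5.4]{BigPuritySub} --- at the cost of a longer proof whose combinatorial core is stated too loosely.

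Two points in that core need tightening before the argument stands. First, ``the number of maximal $q$-strings'' is not a well-defined invariant of a multiset: decompositions into maximal strings are not unique (e.g.\ $\{\alpha,\alpha q,\alpha q,\alpha q^2\}$ decomposes as $(3,1)$ or as $(2,2)$). Second, purity is a property of the specializations $f\circ(r,N)$ and $f'\circ(r',N')$; the generic eigenvalues lie in $\Qbar(\calO')$ and are not Weil numbers, so ``all blocks being pure of the same weight'' has no meaning at the generic fibre. Both are repairable along the lines you sketch: the $\Sp$-decomposition specializes compatibly (with the same $t_i$ and invariant dimensions, by Lemma \ref{Lemma: invariant space}), and the weight computation \emph{at the specialization} forces any two strings of the decomposition lying in a common $q$-orbit to be nested around a common center (a string of length $t'$ must sit at offset $(t-t')/2$ inside one of length $t$); a multiset admitting such a common-center nested decomposition has a unique one, with number of strings equal to the maximal multiplicity within each $q$-orbit --- an invariant manifestly preserved by the injective maps $\res^\intal$ and $\res'^\intal$ fixing $q$. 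Collisions of eigenvalues under the possibly non-injective $f'$ are harmless because the purity constraint is only pulled back from the specialization, never pushed forward. With these precisions your proof is complete and genuinely independent of \cite[Theorem 5.4]{BigPuritySub}.
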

\begin{proof}
Note that by \cite[Theorem 5.4, Proposition 2.8]{BigPuritySub}, there exist positive integers $m, t_1, \cdots, t_m$, unramified characters $\chi_1, \cdots, \chi_m:W_K \to (\scrO^\intal)^\times$, representations $\rho_1, \cdots, \rho_m$ of $W_K$ over $\bbQ^\cl$ with finite image such that 
$$((r', N')\otimes_{\calO'}\overline Q(\calO') )^\Frss \simeq \bigoplus_{i=1}^m \Sp_{t_i} (\res'^\intal \circ (\chi_i \otimes \rho_i))$$
for any Weil-Deligne representation $(r', N'): W_K\to \gln_n(\calO')$ over a domain $\scrO'$ satisfying condition (1), (2). Then lemma \ref{Lemma: invariant space} shows that the conductors of $(r', N') \otimes_{\calO'} \Qbar (\calO')$ and $\oplus_{i=1}^m \Sp_{t_i} (\chi_i \otimes \rho_i)$ are equal. So by theorem \ref{Thm: conductor big}, the result follows by taking $C$ to be the conductor of $\oplus_{i=1}^m \Sp_{t_i} (\chi_i \otimes \rho_i)$. 
\end{proof}

Let $F$ be a number field and $T:G_F \to \scrO$ be a pseudorepresentation such that $T=T_1+\cdots+T_n$ where $T_1:G_F \to \scrO, \cdots, T_n:G_F\to \scrO$ are pseudorepresentations. Using \cite[Theorem 1]{TaylorGaloisReprAssociatedToSiegelModForms}, choose semisimple representations $\sigma_1, \cdots, \sigma_n$ of $G_F$ over $\scrLbar$ such that $\tr \sigma_i = T_i$ for all $1\leq i \leq n$. Fix a finite place $w$ of $F$ not dividing $p$ and assume that $\scrO$ is a $\bbZp$-algebra.

\begin{definition}
\label{Defn: locus}
The \textnormal{irreducibility and purity locus} of $T_1, \cdots, T_n$ is defined to be the collection of all tuples of the form $(\calO,  \frakm, \kappa,  \loc,\rho_1$, $\cdots, \rho_n)$ where $\calO$ is a $\bbZp$-algebra and it is a Henselian Hausdorff domain with maximal ideal $\frakm$, $\kappa$ denotes the residue field of $\calO$ and is an algebraic extension of $\bbQp$, $\loc:\scrO \hra\calO$ is an injective $\bbZp$-algebra homomorphism and for each $1\leq i\leq n$, $\rho_i$ is an irreducible $G_F$-representation over $\overline \kappa$ such that the trace of $\rho_i$ is equal to $\loc \circ T_i\modu \frakm$ and $\rho_i|_{G_w}$ is pure. 
\end{definition}

\begin{theorem}
\label{Thm: conductor sum}
Suppose the irreducibility and purity locus of $T_1, \cdots, T_n$ is nonempty and the restrictions of $\sigma_1, \cdots, \sigma_n$ to $W_w$ are monodromic. Then for any element $(\calO, \frakm, \kappa, \loc, \rho_1, \cdots, \rho_n)$ in the irreducibility and purity locus of $T_1$, $\cdots$, $T_n$, we have 
$$\cond \left(\WD\left(\bigoplus_{i=1}^n \rho_i |_{W_w} \right)\right) 
= \cond \left(\WD\left(\bigoplus_{i=1}^n \sigma_i |_{W_w} \right)\right).$$
\end{theorem}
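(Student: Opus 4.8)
The plan is to collapse the problem to a single summand and then interpolate between the generic object $\sigma_i|_{W_w}$ and the special object $\rho_i|_{W_w}$ through the pseudorepresentation $T_i$, leaning on the machinery behind Theorem \ref{Thm: conductor pseudorepresentation}. Since $\WD$ commutes with direct sums and the conductor is additive over direct sums, it suffices to prove, for each $i$, that $\cond(\WD(\sigma_i|_{W_w}))=\cond(\WD(\rho_i|_{W_w}))$. The first point I would record is that the monodromy never contributes to traces: from the defining relation of a Weil--Deligne representation (Definition \ref{Defn: Weil-Deligne representations}), the cyclicity of the trace, and the finiteness of the image of inertia, one gets $\tr(r(g)N^k)=0$ for all $g\in W_w$ and all $k\geq 1$, so the Weil part of $\WD(\tau)$ has the same trace as $\tau$. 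Hence both $\sigma_i|_{W_w}$ and any representation realizing $T_i$ give rise to Weil--Deligne representations whose Weil parts have trace governed by $\res\circ T_i|_{W_w}$, which is exactly what is needed to regard them as lifts of the pseudorepresentation $T_i|_{W_w}$ in the sense of Theorem \ref{Thm: conductor pseudorepresentation}.

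Next I would manufacture, from the chosen point of the locus, an honest Weil--Deligne lift of $T_i|_{W_w}$ over a domain containing $\scrO$ whose specialization is the pure representation $\rho_i|_{W_w}$; this is where the defining conditions of the locus are used. Since $\calO$ is Henselian and $\rho_i$ is irreducible, the lifting theory for pseudorepresentations shows that the $\calO$-valued pseudorepresentation $\loc\circ T_i$ is the trace of a genuine representation $\tilde\rho_i\colon G_F\to\gln_{d_i}(\calO)$ with $\tilde\rho_i\bmod\frakm\cong\rho_i$, where $d_i=\dim\sigma_i$. Restricting to $W_w$ and applying $\WD$ (the realization is monodromic, its reduction being pure) produces a Weil--Deligne representation $(r,N)$ over $\calO$ with $\tr r=\loc\circ T_i|_{W_w}$, and the map $\calO\to\kappa\hookrightarrow\bbQpbar$ specializes it to the pure representation $\WD(\rho_i|_{W_w})$. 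Thus $(r,N)$ satisfies conditions (1) and (2) of Theorem \ref{Thm: conductor pseudorepresentation} relative to $T_i|_{W_w}$, and that theorem yields a constant $C_i$ with $\cond(\WD(\rho_i|_{W_w}))=\cond\big(\WD(\tilde\rho_i|_{W_w})\otimes_\calO\Qbar(\calO)\big)=C_i$.

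It remains to identify $\cond(\WD(\sigma_i|_{W_w}))$ with $C_i$, and for this I would run \cite[Theorem 5.4, Proposition 2.8]{BigPuritySub} exactly as in the proof of Theorem \ref{Thm: conductor pseudorepresentation}. The pseudorepresentation $T_i|_{W_w}$, together with the single pure specialization just produced, fixes positive integers $t_1,\dots,t_m$, unramified characters $\chi_1,\dots,\chi_m$, and finite-image representations $\psi_1,\dots,\psi_m$ over $\bbQ^\cl$, such that the Frobenius-semisimplification of the Weil--Deligne representation attached to a monodromic realization of $T_i|_{W_w}$ is $\bigoplus_{j=1}^m\Sp_{t_j}(\chi_j\otimes\psi_j)$ after the appropriate change of coefficients. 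Applying this to the monodromic representation $\sigma_i|_{W_w}$ gives $\WD(\sigma_i|_{W_w})^{\Frss}\simeq\bigoplus_{j=1}^m\Sp_{t_j}((\chi_j\otimes\psi_j)_{/\scrLbar})$, while $\tilde\rho_i|_{W_w}$ gives the same decomposition over $\Qbar(\calO)$; Lemma \ref{Lemma: invariant space}, used as in Theorem \ref{Thm: conductor big}, then forces $\cond(\WD(\sigma_i|_{W_w}))=\cond(\WD(\tilde\rho_i|_{W_w})\otimes_\calO\Qbar(\calO))=C_i$. Summing over $i$ gives the theorem. The essential step, and the one I expect to be the real obstacle, is the last: one must know that a single pure specialization of $T_i$ rigidifies the monodromy of \emph{every} merely monodromic realization, so that $\sigma_i|_{W_w}$, which is not assumed pure, is nonetheless forced into the shape $\bigoplus_j\Sp_{t_j}(\chi_j\otimes\psi_j)$. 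This is precisely the family-purity input of \cite{BigPuritySub}, and it is exactly here that the hypothesis that $\sigma_i|_{W_w}$ be monodromic is indispensable.
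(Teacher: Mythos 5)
The $\rho_i$ side of your argument is essentially sound and matches the spirit of the machinery the paper invokes: reducing to a single summand, lifting $\loc\circ T_i$ over the Henselian local domain $\calO$ to a genuine representation $\tilde\rho_i$ with reduction $\rho_i$ (Nyssen--Rouquier, using irreducibility of $\rho_i$), and feeding that lift into Theorem \ref{Thm: conductor pseudorepresentation}. The gap is exactly the step you flag as ``the real obstacle,'' and it cannot be closed the way you propose, because the principle you invoke is false. You need that a single pure specialization of $T_i$ forces \emph{every} monodromic realization of the pseudocharacter $T_i|_{W_w}$ into the shape $\bigoplus_j\Sp_{t_j}(\chi_j\otimes\psi_j)$. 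But the pseudocharacter $T=\tr\left(1\oplus|\cdot|\right)$ of $W_K$ already admits two Weil--Deligne realizations over the same ring: the special representation $\Sp_2(1)$, which is pure and has conductor $1$, and the unramified representation $1\oplus|\cdot|$ with $N=0$, which is monodromic, not pure, and has conductor $0$. So a local pseudocharacter plus one pure realization does not rigidify the monodromy of other realizations. This is precisely why \cite[Theorem 5.4]{BigPuritySub} and Theorem \ref{Thm: conductor pseudorepresentation} carry condition (2) --- purity of a specialization of the very lift in question --- a condition $\sigma_i|_{W_w}$ is not known to satisfy; and it is why the paper proves Theorem \ref{Thm: conductor sum} by citing a different result, \cite[Theorem 5.6, Proposition 2.8]{BigPuritySub}, rather than deducing it from Theorem \ref{Thm: conductor pseudorepresentation}.

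What is missing is a \emph{global} argument tying $\sigma_i$ to your lift $\tilde\rho_i$; trace information restricted to $W_w$ alone can never suffice. Since $\rho_i$ is irreducible over $\overline\kappa$ and $\calO$ is Henselian local, the generic fibre $\tilde\rho_i\otimes_\calO\overline Q(\calO)$ is irreducible, hence semisimple; as it has the same trace as $\sigma_i$, it is isomorphic to $\sigma_i\otimes_{\scrLbar}\overline Q(\calO)$ \emph{as a representation of $G_F$}, because semisimple representations over a field of characteristic zero are determined by their traces. Only after this identification does $\WD(\sigma_i|_{W_w})$ (after extension of scalars) coincide with $\WD(\tilde\rho_i|_{W_w})\otimes_\calO\overline Q(\calO)$, to which Theorem \ref{Thm: conductor pseudorepresentation} applies since $\tilde\rho_i$ has the pure specialization $\rho_i$. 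Note also that this identification, not ``its reduction being pure'' as you assert, is what makes the generic fibre of $\tilde\rho_i|_{W_w}$ monodromic --- purity of the reduction says nothing about quasi-unipotence of inertia on the generic fibre, and this is exactly why monodromicity of $\sigma_i|_{W_w}$ appears as a hypothesis of the theorem. The paper's own proof packages all of this into the single citation \cite[Theorem 5.6, Proposition 2.8]{BigPuritySub}, which produces matching decompositions of $\WD\left(\bigoplus_i\rho_i|_{W_w}\right)^\Frss$ and $\WD\left(\bigoplus_i\sigma_i|_{W_w}\right)^\Frss$ simultaneously; your route through \cite[Theorem 5.4]{BigPuritySub} alone does not reach $\sigma_i$.
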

\begin{proof}
By \cite[Theorem 5.6, Proposition 2.8]{BigPuritySub}, there exist positive integers $m, t_1, \cdots, t_m$, unramified characters $\chi_1, \cdots, \chi_m:W_K \to (\scrO^\intal)^\times$, representations $\theta_1, \cdots, \theta_m$ of $W_K$ over $\bbQ^\cl$ with finite image such that $\WD\left(\oplus_{i=1}^n \rho_i |_{W_w} \right)^\Frss$ is isomorphic to $\oplus_{i=1}^m \Sp_{t_i} (\pi_\frakm^\intal \circ \loc^\intal \circ (\chi_i \otimes \theta_i))$ and $\WD \left(\oplus_{i=1}^n \sigma_i |_{W_w} \right)^\Frss$ is isomorphic to $\oplus_{i=1}^m \Sp_{t_i} (\chi_i \otimes \theta_i)$. Then lemma \ref{Lemma: invariant space} shows that the conductors of $\WD\left(\oplus_{i=1}^n \rho_i |_{W_w} \right)$ and $\WD \left(\oplus_{i=1}^n \sigma_i |_{W_w} \right)$ are equal to the conductor of $\oplus_{i=1}^m \Sp_{t_i} (\chi_i \otimes \theta_i)$. The result follows.  
\end{proof}

\section{Conductors in families}
In this section, using the example of Hida family of ordinary automorphic representations for definite unitary groups, we show how results of section \ref{Sec: Main results} describes the variation of tame conductors in $p$-adic families. 

Let $F$ be a CM field, $F^+$ be its maximal totally real subfield. Let $n\geq 2$ be an integer and assume that if $n$ is even, then $n[F^+:\bbQ]$ is divisible by 4. Let $\ell>n$ be a rational prime and assume that every prime of $F^+$ lying above $\ell$  splits in $F$. Let $K$ be a finite extension of $\bbQl$ in $\bbQlbar$ which contains the image of every embedding $F\hra \bbQlbar$. Let $S_\ell$ denote the set of places of $F^+$ above $\ell$. Let $R$ denote a finite set of finite places of $F^+$ disjoint from $S_\ell$ and consisting of places which split in $F$. For each place $v\in S_\ell \cup R$, choose once and for all a place $\widetilde v$ of $F$ lying above $v$. For $v\in R$, let $\Iw(\widetilde v)$ be the compact open subgroup of $\gln_n(\calO_{F_{\widetilde v}})$ and $\chi_v$ be the character as in \cite[\S 2.1, 2.2]{GeraghtyPhDThesis}. 

Let $G$ be the reductive algebraic group over $F^+$ as in \cite[\S 2.1]{GeraghtyPhDThesis}. For each dominant weight $\lambda$ (as in \cite[Definition 2.2.3]{GeraghtyPhDThesis}) for $G$, the group $G(\bbA_{F^+}^{\infty, R})\times \prod_{v\in R} \Iw(\widetilde v)$ acts on the spaces $S_{\lambda, \{\chi_v\}}(\bbQlbar)$, $S_{\lambda, \{\chi_v\}}^\ord(\calO_K)$ (as in \cite[Definition 2.2.4, 2.4.2]{GeraghtyPhDThesis}). For an irreducible constituent $\pi$ of the $G(\bbA_{F^+}^{\infty, R})\times \prod_{v\in R} \Iw(\widetilde v)$-representation $S_{\lambda, \{\chi_v\}}(\bbQlbar)$, let $\WBC(\pi)$ denote the weak base change of $\pi$ to $\gln_n(\bbA_F)$ (which exists by \cite[Corollaire 5.3]{LabesseChangementDeBaseCMDiscreteSeries}) and let $r_\pi:G_F\to \gln_n(\bbQlbar)$ (as in \cite[Proposition 2.7.2]{GeraghtyPhDThesis}) denote the unique (up to equivalence) continuous semisimple representation attached to $\WBC(\pi)$ via \cite[Theorem 3.2.3]{ChenevierHarrisConstructionAutoGalRepr}. 

An irreducible constituent $\pi$ of the $G(\bbA_{F^+}^{\infty, R})\times \prod_{v\in R} \Iw(\widetilde v)$-representation $S_{\lambda, \{\chi_v\}}(\bbQlbar)$ is said to be an {\it ordinary automorphic representation for $G$} if $\pi^{U(\frakl^{b,c})}\cap S_{\lambda, \{\chi_v\}}^\ord(U(\frakl^{b,c}), \calO_K)\neq 0$ for some integers $0\leq b\leq c$ (see \cite[Definition 2.2.4, \S 2.3]{GeraghtyPhDThesis} for details). Let $U$ be a compact open subgroup of $G(\bbA_{F^+}^\infty)$, $T$ be a finite set of finite places of $F^+$ containing $R\cup S_\ell$ and such that every place in $T$ splits in $F$ (see \cite[\S 2.3]{GeraghtyPhDThesis}). Let $\bbT^\ord$ denote the universal ordinary Hecke algebra $\bbT^{T, \ord}_{\{\chi_v\}}(U(\frakl^\infty), \calO_K)$ (as in \cite[Definition 2.6.2]{GeraghtyPhDThesis}). Let $\Lambda$ be the completed group algebra as in \cite[Definition 2.5.1]{GeraghtyPhDThesis}. By definition of $\bbT^\ord$, it is equipped with a $\Lambda$-algebra structure and is finite over $\Lambda$. An $\calO_K$-algebra homomorphism $f:A \to \bbQlbar$ is said to be an {\it arithmetic specialization} of a finite $\Lambda$-algebra $A$ if $\ker (f|_\Lambda)$ is equal to the prime ideal $\wp_{\lambda, \alpha}$ (as in \cite[Definition 2.6.3]{GeraghtyPhDThesis}) of $\Lambda$ for some dominant weight $\lambda$ for $G$ and a finite order character $\alpha:T_n(\frakl) \to \calO_K^\times$. By \cite[Lemma 2.6.4]{GeraghtyPhDThesis}, each arithmetic specialization $\eta$ of $\bbT^\ord$ determines an ordinary automorphic representation $\pi_\eta$ for $G$. An arithmetic specialization $\eta$ of $\bbT^\ord$ is said to be {\it stable} if $\WBC(\pi_\eta)$ is cuspidal. 

Let $\frakm$ be a non-Eisenstein maximal ideal of $\bbT^\ord$ (in the sense of \cite[\S 2.7]{GeraghtyPhDThesis}). Let $r_\frakm$ denote the representation of $G_{F^+}$ as in \cite[Proposition 2.7.4]{GeraghtyPhDThesis}. Then by restricting it to $G_F$ and then composing with the projection $\gln_n(\bbT^\ord_\frakm)\times \gln_1(\bbT^\ord_\frakm)\to \gln_n(\bbT^\ord_\frakm)$, we get a continuous representation $G_F \to \gln_n(\bbT^\ord_\frakm)$ which is denoted by $r_\frakm$ by abuse of notation. Since $\frakm$ is non-Eisenstein, the $G_F$-representations $\eta\circ r_\frakm$ and $r_{\pi_\eta}$ are isomorphic for any arithmetic specialization $\eta$ of $\bbT^\ord_\frakm$ (by \cite[Proposition 2.7.2, 2.7.4]{GeraghtyPhDThesis}). 

\begin{theorem}
\label{Thm: Application: Hida unitary}
Let $w\nmid \ell$ be a finite place of $F$, $\fraka$ be a minimal prime of $\bbT^\ord_\frakm$ and $\frakm$ be the maximal ideal of $\bbT^\ord$ containing $\fraka$. Suppose $\frakm$ is non-Eisenstein. Denote the quotient ring $\bbT^\ord_\frakm/\fraka$ by $\calR(\fraka)$ and the representation $r_\frakm \modu \fraka$ by $r_\fraka$. Then there exists a non-negative integer $C$ such that the conductor of $\WD(r_{\pi_\eta}|_{W_w})$ is equal to $C$ for any stable arithmetic specialization $\eta$ of $\calR(\fraka)$. Moreover, the conductor of $\WBC(\pi_\eta)_w$ is also equal to $C$ for any such specialization $\eta$. 
\end{theorem}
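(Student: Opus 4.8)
The plan is to exhibit a Weil--Deligne representation over a domain whose pure specializations are exactly the $\WD(r_{\pi_\eta}|_{W_w})$ for stable arithmetic $\eta$, and then to invoke Theorem \ref{Thm: conductor big}. Because $\fraka$ is a minimal prime, $\calR(\fraka)=\bbT^\ord_\frakm/\fraka$ is an integral domain; if it admits any stable arithmetic specialization then it maps to the characteristic zero field $\bbQlbar$ and hence has characteristic zero (otherwise the assertion is vacuous). Since $\calR(\fraka)$ is only an $\calO_K$-algebra, I would pass to $\scrO:=\calR(\fraka)\otimes_\bbZ\bbQ$, a domain containing $\bbQ$ with the same fraction field as $\calR(\fraka)$; every arithmetic specialization $\eta\colon\calR(\fraka)\to\bbQlbar$ extends uniquely to $\scrO$. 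Note that the pseudorepresentation form, Theorem \ref{Thm: conductor pseudorepresentation}, is not needed here: because $\frakm$ is non-Eisenstein we have an honest representation $r_\fraka\colon G_F\to\gln_n(\calR(\fraka))$, not merely a pseudorepresentation, so the most direct Theorem \ref{Thm: conductor big} applies (in contrast to the eigenvariety situation that motivates the pseudorepresentation results). Here the primes $p,\ell$ of Section \ref{Sec: Main results} play the roles of $\ell$ and the residue characteristic of $w$, and $\bbQpbar$ is replaced by $\bbQlbar$.

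First I would restrict $r_\fraka$ to $G_w$ and, since $w\nmid\ell$, apply Grothendieck's $\ell$-adic monodromy theorem in the family (as developed in \cite{BigPuritySub}) to obtain a Weil--Deligne representation $(r,N)=\WD(r_\fraka|_{W_w})$ over $\scrO$; the monodromy $N$ is a logarithm of a power of the image of a generator of tame inertia and so has entries in $\scrO$. The essential compatibility is that for every arithmetic specialization $\eta$ one has $\eta\circ(r,N)\simeq\WD(r_{\pi_\eta}|_{W_w})$, which follows from the isomorphism $\eta\circ r_\fraka\simeq r_{\pi_\eta}$ recorded before the theorem together with the fact that the monodromy construction, being built from $\log$ and $\exp$, commutes with the ring map $\eta$ (so that $N$ does not ``jump'' under specialization).

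Next I would prove that the stable specializations are pure. If $\eta$ is stable then $\WBC(\pi_\eta)$ is a cuspidal, regular algebraic, essentially conjugate self-dual representation of $\gln_n(\bbA_F)$; by the construction of $r_{\pi_\eta}$ through \cite{ChenevierHarrisConstructionAutoGalRepr,GeraghtyPhDThesis} and the temperedness (Ramanujan) of $\WBC(\pi_\eta)_w$, the representation $r_{\pi_\eta}|_{W_w}$ is pure, so $\eta\circ(r,N)$ is pure. Theorem \ref{Thm: conductor big} then yields, for every stable arithmetic $\eta$,
$$\cond\left(\WD(r_{\pi_\eta}|_{W_w})\right)=\cond\left(\eta\circ(r,N)\right)=\cond\left((r,N)\otimes_\scrO\Qbar(\scrO)\right),$$
and the right-hand side is independent of $\eta$. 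Setting $C$ equal to this common conductor of the generic fibre produces the desired non-negative integer. For the final assertion I would use local--global compatibility, including the monodromy operator, for the representations $r_{\pi_\eta}$: under the local Langlands correspondence the Artin conductor of $\WD(r_{\pi_\eta}|_{W_w})$ equals the conductor of the local component $\WBC(\pi_\eta)_w$, so the latter also equals $C$.

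The conductor bookkeeping is entirely absorbed into Theorem \ref{Thm: conductor big}; the real work lies in the two external inputs. The first is that restricting the family $r_\fraka$ to $W_w$ genuinely yields a Weil--Deligne representation over the domain $\scrO$ with a monodromy that specializes correctly, which is exactly the kind of family-monodromy statement provided by \cite{BigPuritySub}. The second, and the main obstacle, is the purity of $r_{\pi_\eta}|_{W_w}$ at the stable points, which rests on temperedness of cuspidal cohomological essentially conjugate self-dual automorphic representations and on full local--global compatibility. The restriction to \emph{stable} specializations is indispensable precisely here: for non-stable $\eta$ the base change $\WBC(\pi_\eta)$ need not be tempered at $w$, purity can fail, and the tame conductor is then allowed to vary.
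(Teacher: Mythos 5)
Your proposal is correct and follows essentially the same route as the paper: obtain a Weil--Deligne representation of the family via Grothendieck's monodromy theorem, establish purity at stable specializations from Caraiani's local-global compatibility results, apply Theorem \ref{Thm: conductor big} to get the common conductor $C$, and deduce the automorphic statement since the local Langlands correspondence preserves conductors. Your added care about the coefficient ring (passing to $\calR(\fraka)\otimes_\bbZ\bbQ$ so that the domain contains $\bbQ$, and checking that the monodromy operator specializes compatibly under $\eta$) makes explicit technical points that the paper leaves implicit, but does not change the argument.
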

\begin{proof}
If $\pi$ is an irreducible constituent of the $G(\bbA_{F^+}^{\infty, R})\times \prod_{v\in R} \Iw(\widetilde v)$-representation $S_{\lambda, \{\chi_v\}}(\bbQlbar)$ such that $\WBC(\pi)$ is cuspidal, then for any finite place $w$ of $F$ not dividing $\ell$, $r_\pi|_{G_w}$ is pure by \cite[Theorem 1.1, 1.2]{CaraianiLocalGlobalCompatibility} and the proofs of theorem 5.8, corollary 5.9 of \loccit. Note that $r_\fraka|_{W_w}$ is monodromic by Grothendieck's monodromy theorem (see \cite[p.\,515--516]{SerreTate}). So theorem \ref{Thm: conductor big} gives the first part. Since local Langlands correspondence preserves conductors, the rest follows from \cite[Theorem 1.1]{CaraianiLocalGlobalCompatibility} on local-global compatibility of cuspidal automorphic representations for $\gln_n$. 
\end{proof}

\subsection*{Acknowledgements}
It is a pleasure to thank Olivier Fouquet for his guidance during the preparation of this article. I also thank the Max Planck Institute for Mathematics for providing an optimal working condition.

\providecommand{\bysame}{\leavevmode\hbox to3em{\hrulefill}\thinspace}
\providecommand{\MR}{\relax\ifhmode\unskip\space\fi MR }
\providecommand{\MRhref}[2]{%
  \href{http://www.ams.org/mathscinet-getitem?mr=#1}{#2}
}
\providecommand{\href}[2]{#2}

\end{document}